\documentclass{amsart}

\usepackage{amsfonts}
\usepackage{amsmath}
\usepackage{amssymb}
\usepackage{amsthm}
\usepackage{color}
\usepackage{graphicx}
\usepackage{xcolor}
\usepackage{xspace}
\usepackage{enumerate}

\newtheorem{theorem}{Theorem}
\newtheorem{lemma}[theorem]{Lemma}

\newtheorem{corollary}[theorem]{Corollary}

\theoremstyle{definition}
\newtheorem{definition}[theorem]{Definition}
\newtheorem{remark}[theorem]{Remark}

\newcommand{\ie}{i.{}e.{}\xspace} 
\newcommand{\R}{\mathbb{R}} 
\newcommand{\bd}{\mathbf{d}} 
\newcommand{\bm}{\mathbf{m}} 
\newcommand{\bn}{\mathbf{n}} 
\newcommand{\set}[2]{\left\{#1\,:\,#2\right\}} 
\DeclareMathOperator{\supp}{supp} 
\newcommand{\eqdef}{\mbox{\,\raisebox{0.2ex}{\scriptsize\ensuremath{\mathrm:}}\ensuremath{=}\,}} 
\newcommand{\defn}[1]{{\sl{\color{blue}#1}}} 
\newcommand{\ssm}{\smallsetminus} 
\newcommand{\del}[2]{#1-#2} 
\newcommand{\link}[2]{\operatorname{lk}_{#1}(#2)} 
\DeclareMathOperator{\diam}{diam} 
\newcommand{\one}{1\!\!1} 
\DeclareMathOperator{\rank}{rank} 
\DeclareMathOperator{\corank}{corank} 

\begin{document}

\title[Obstructions to weak decomposability for simplicial polytopes]{Obstructions to weak decomposability \\ for simplicial polytopes}

\author{Nicolai H\"ahnle}
\address[NH]{Technische Universit\"at Berlin}
\email{haehnle@math.tu-berlin.de}
\urladdr{}

\author{Steven Klee}
\address[SK]{University of California, Davis}
\email{klee@math.ucdavis.edu}
\urladdr{http://www.math.ucdavis.edu/~klee}

\author{Vincent Pilaud}
\address[VP]{CNRS \& LIX, \'Ecole Polytechnique, Palaiseau}
\email{vincent.pilaud@lix.polytechnique.fr}
\urladdr{http://www.lix.polytechnique.fr/~pilaud/}

\thanks{
SK was partially supported by  NSF VIGRE grant DMS-0636297.
VP was partially supported by grant MTM2011-22792 of the Spanish Ministerio de Ciencia e Innovaci\'on and by European Research Project ExploreMaps (ERC StG 208471).
}

\begin{abstract}
Provan and Billera introduced notions of (weak) decomposability of simplicial complexes as a means of attempting to prove polynomial upper bounds on the diameter of the facet-ridge graph of a simplicial polytope.
Recently, De Loera and Klee provided the first examples of simplicial polytopes that are not weakly vertex-decomposable.
These polytopes are polar to certain simple transportation polytopes.
In this paper, we refine their analysis to prove that these $d$-dimensional polytopes are not even weakly $O(\sqrt{d})$-decomposable.  As a consequence, (weak) decomposability cannot be used to prove a polynomial version of the Hirsch conjecture.
\end{abstract}

\maketitle

\section{Introduction}

The Hirsch conjecture, which was originally proposed by Hirsch in a letter to Dantzig in 1957, stated that the graph of a simple $d$-dimensional polytope with~$n$ facets has diameter at most~$n-d$.
Over the past 55 years, the Hirsch conjecture has motivated a tremendous amount of research on diameters of polytopes.
We do not attempt to recount this work here, but instead we point out excellent surveys of De Loera~\cite{DeLoera}, Kim and Santos~\cite{KimSantos}, and Todd~\cite{Todd}.
The most remarkable result in this realm is Santos's counterexample to the Hirsch conjecture~\cite{Santos} and its recent improvement by Matschke, Santos, and Weibel~\cite{Hirsch-Improved}.

The recent counterexamples to the Hirsch conjecture have answered one question and posed many more.
One of the most notable problems is the polynomial Hirsch conjecture, which states that the diameter of the graph of a simple $d$-dimensional polytope with $n$ facets is bounded by a polynomial function of~$n$ and~$d$.
In this paper, we work in the polar setting: we consider a simplicial $d$-dimensional polytope~$P$ on~$n$ vertices and seek bounds on the diameter of the facet-ridge graph of~$P$, which has a node for each facet of~$P$ and an edge between two nodes if their corresponding facets intersect along a ridge (a codimension-one face).

Provan and Billera~\cite{ProvanBillera} defined a notion of (weak) decomposability for an arbitrary simplicial complex and showed that the facet-ridge graph of a (weakly) $k$-decomposable simplicial complex has diameter bounded by a linear function of the number of $k$-dimensional faces of~$\Delta$.
Since the number of $k$-dimensional faces in an arbitrary simplicial complex on~$n$ vertices is bounded by~${n \choose k+1}$, the diameter of a (weakly) $k$-decomposable simplicial complex is bounded by a polynomial of degree~$k+1$ in its number~$n$ of vertices and its dimension~$d$.

One approach to proving the polynomial Hirsch conjecture would be to show that there is a fixed integer~$k$ such that all simplicial polytopes are (weakly) $k$-decomposable.
In this paper, we show that this approach will not work.
Specifically, we show that for any fixed integer~$k$, there is a simplicial $d$-dimensional polytope that is not weakly $k$-decomposable provided $d \geq \frac{(k+3)^2}{2}$.
Despite this obstruction, these polytopes still satisfy the Hirsch conjecture.

The remainder of the paper is structured as follows.
In Section~\ref{section:background}, we give all relevant definitions related to simplicial complexes, their diameters, and (weak) decomposability.
Further, we discuss the family of (simple) transportation polytopes and their combinatorial properties.
In Section~\ref{section:non-decomposable}, we define a certain family of simple $2 \times n$~transportation polytopes whose polars are not weakly $k$-decomposable for any fixed integer $k$.

\section{Background and definitions}
\label{section:background}

\subsection{Simplicial complexes and decomposability}

A \defn{simplicial complex}~$\Delta$ on vertex set~$V$ is a collection of subsets~$\tau \subseteq V$ (called \defn{faces}) such that if~$\tau$ is a face of~$\Delta$ and~$\sigma \subseteq \tau$, then~$\sigma$ is also a face of~$\Delta$.
The \defn{dimension} of a face~$\tau \in \Delta$ is~$\dim \tau \eqdef |\tau|-1$, and the \defn{dimension} of~$\Delta$ is~$\dim \Delta \eqdef \max \set{\dim \tau}{\tau \in \Delta}$.
We say that~$\Delta$ is \defn{pure} if all of its \defn{facets} (maximal faces under inclusion) have the same dimension.

We will borrow terminology from matroid complexes to define the \defn{rank} of a subset~$S \subseteq V$ to be~$\rank(S) \eqdef \max\set{|\tau|}{\tau \subseteq S, \; \tau \in \Delta}$, and the \defn{corank} of~$S$ to be~$\corank(S) \eqdef |S|-\rank(S)$.
Alternatively, the rank of~$S$ is one more than the dimension of the restriction of~$\Delta$ to the vertices in~$S$.

Let~$\tau$ be a face of the simplicial complex~$\Delta$.
The \defn{deletion} of~$\tau$ in~$\Delta$ is the simplicial complex~$\del{\Delta}{\tau} \eqdef \set{\sigma \in \Delta}{\tau \not\subseteq \sigma}$.
The \defn{link} of~$\tau$ in~$\Delta$ is the simplicial complex~$\link{\Delta}{\tau} \eqdef \set{\sigma \in \Delta}{\sigma \cap \tau = \emptyset, \; \sigma \cup \tau \in \Delta}$.

Given a pure simplicial complex~$\Delta$ and facets~$\sigma, \tau \in \Delta$, the \defn{distance} from $\sigma$~to~$\tau$ is the length of the shortest path~$\sigma = \rho_0, \rho_1, \dots, \rho_t = \tau$ where the~$\rho_i$ are facets of~$\Delta$ and~$\rho_i$ intersects~$\rho_{i+1}$ along a \defn{ridge} (a codimension-one face) for all~$0 \leq i < t$.
The \defn{diameter} of a pure simplicial complex, denoted $\diam(\Delta)$, is the maximum distance between any two facets in $\Delta$.

One approach to establishing diameter bounds for pure simplicial complexes is to study decompositions that generalize the more well-known property of a shedding order in a natural way.
Provan and Billera~\cite{ProvanBillera} defined a notion of (weak) $k$-decomposability for pure simplicial complexes and showed that (weakly) $k$-decomposable simplicial complexes satisfy natural diameter bounds.

\begin{definition}[\protect{\cite[Definition~2.1]{ProvanBillera}}]
\label{def:decomposability}
Let $\Delta$ be a $(d-1)$-dimensional simplicial complex and let $0 \leq k \leq d-1$.
We say that $\Delta$ is \defn{$k$-decomposable} if it is pure and if either~$\Delta$ is a $(d-1)$-simplex or there exists a face~$\tau \in \Delta$  with $\dim(\tau) \le k$ (called a \defn{shedding} face) such that
\begin{enumerate}[(i)]
	\item the deletion~$\del{\Delta}{\tau}$ is $(d-1)$-dimensional and $k$-decomposable, and
	\item the link $\link{\Delta}{\tau}$ is $(d-|\tau|-1)$-dimensional and $k$-decomposable.
\end{enumerate}
\end{definition}

\begin{theorem}[\protect{\cite[Theorem~2.10]{ProvanBillera}}]
If~$\Delta$ is a $k$-decomposable $(d-1)$-dimensional simplicial complex, then $$\diam(\Delta) \le f_k(\Delta)-{d \choose k+1},$$ where $f_k(\Delta)$ denotes the number of $k$-dimensional faces in $\Delta$.
\end{theorem}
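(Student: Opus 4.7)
The plan is to induct on the recursive structure of $k$-decomposability, for instance on the number of facets of $\Delta$. The base case is immediate: if $\Delta$ is the $(d-1)$-simplex, then $\diam(\Delta) = 0$ and $f_k(\Delta) = \binom{d}{k+1}$, so both sides of the inequality vanish. For the inductive step, pick a shedding face $\tau$ with $\dim \tau \le k$ and set $D = \del{\Delta}{\tau}$ and $L = \link{\Delta}{\tau}$. By the definition of $k$-decomposability, $D$ is $k$-decomposable of dimension $d-1$ and $L$ is $k$-decomposable of dimension $d - |\tau| - 1$, so both satisfy the inductive hypothesis.

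The facets of $\Delta$ split into those not containing $\tau$ (exactly the facets of $D$, since purity of $D$ forces them to have full dimension $d-1$) and those containing $\tau$ (in bijection with facets of $L$ via $F \mapsto F \setminus \tau$). Paths of adjacent facets within $D$ transfer directly to $\Delta$, and adjacent facets $G, G'$ of $L$ lift to adjacent facets $\tau \cup G$ and $\tau \cup G'$ of $\Delta$ (sharing the ridge $\tau \cup (G \cap G')$). A \emph{bridge lemma} glues the two pieces together: for any facet $\sigma \supseteq \tau$ and any vertex $v \in \tau$, the set $\sigma \setminus \{v\}$ is a $(d-2)$-face of $D$; since $D$ is pure of dimension $d-1$, some facet $\sigma'$ of $D$ contains it, and $\sigma, \sigma'$ then share the ridge $\sigma \setminus \{v\}$. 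Using these facts, the distance between two facets $\sigma_1, \sigma_2$ of $\Delta$ is bounded by a case analysis: (a) both in $D$ gives distance at most $\diam(D)$; (b) both containing $\tau$ gives distance at most $\diam(L)$; (c) the mixed case gives distance at most $\diam(D) + 1$ via the bridge lemma.

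Plugging in the inductive bounds $\diam(D) \le f_k(D) - \binom{d}{k+1}$ and $\diam(L) \le f_k(L) - \binom{d-|\tau|}{k+1}$, the claim reduces to three arithmetic inequalities: the trivial $f_k(D) \le f_k(\Delta)$; the bound $f_k(\Delta) - f_k(D) \ge 1$, which holds because any facet of $\Delta$ containing $\tau$ carries at least one $k$-face through $\tau$ (as $\dim \tau \le k \le d-1$); and the key inequality $f_k(\Delta) - f_k(L) \ge \binom{d}{k+1} - \binom{d-|\tau|}{k+1}$. I expect this last inequality to be the main obstacle. The idea is to fix a single facet $\sigma \supseteq \tau$, viewed as a $(d-1)$-simplex on $d$ vertices: precisely $\binom{d}{k+1} - \binom{d-|\tau|}{k+1}$ of its $k$-subfaces meet $\tau$, and each of these belongs to $\Delta$ but not to $L$ (since every face of the link is disjoint from $\tau$). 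Combining this counting argument with the case analysis yields $\diam(\Delta) \le f_k(\Delta) - \binom{d}{k+1}$.
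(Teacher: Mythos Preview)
The paper does not supply its own proof of this theorem; it merely quotes the result from Provan and Billera's original paper. So there is nothing to compare your argument against here. That said, your inductive scheme is exactly the standard one: split the facets into those in the deletion $D$ and those containing $\tau$, lift paths from $D$ and from the link $L$, and use a one-step bridge between the two pieces. The base case, the bridge lemma, and the counting inequalities you isolate are all correct as stated.

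One technical point deserves a closer look. In case~(b) you invoke the inductive hypothesis on $L$ to obtain $\diam(L)\le f_k(L)-\binom{d-|\tau|}{k+1}$. But the theorem (and the definition of $k$-decomposability as stated) is formulated for $0\le k\le\dim(\cdot)$, whereas $\dim L=d-|\tau|-1$ can be smaller than $k$ when $|\tau|$ is close to $k+1$. For instance, take $\Delta$ the boundary of a tetrahedron ($d=3$), $k=1$, and $\tau$ an edge: then $L$ is two isolated points, $f_1(L)=0$, $\binom{1}{2}=0$, yet $\diam(L)=1$, so the inequality you want for $L$ fails. Provan and Billera handle this by a convention (effectively replacing $k$ by $\min(k,\dim L)$ in the recursion on the link) together with a short auxiliary argument; as written, your case~(b) needs a sentence acknowledging and patching this edge case. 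Apart from this detail, the proof is sound.
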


\begin{definition}[\protect{\cite[Definition~4.2.1]{ProvanBillera}}]
\label{def:weakecomposability}
Let $\Delta$ be a $(d-1)$-dimensional simplicial complex and let $0 \leq k \leq d-1$.
We say that $\Delta$ is \defn{weakly $k$-decomposable} if it is pure and if either~$\Delta$ is a $(d-1)$-simplex or there exists a face~$\tau \in \Delta$ of dimension~$\dim(\tau) \le k$ (called a \defn{shedding} face) such that the deletion~$\del{\Delta}{\tau}$ is $(d-1)$-dimensional and weakly $k$-decomposable.
\end{definition}

\begin{theorem}[\protect{\cite[Theorem~4.2.3]{ProvanBillera}}]
If~$\Delta$ is a weakly $k$-decomposable $(d-1)$-dimensional simplicial complex, then $$\diam(\Delta) \le 2f_k(\Delta).$$
\end{theorem}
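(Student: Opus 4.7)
The plan is to induct on $f_k(\Delta)$, with the base case being a $(d-1)$-simplex, which has one facet and diameter $0$. For the inductive step, pick a shedding face $\tau$ with $\dim \tau \leq k$ and set $\Delta' \eqdef \del{\Delta}{\tau}$. By the recursive definition, $\Delta'$ is weakly $k$-decomposable, hence pure of dimension $d-1$. Since $\Delta$ is pure of dimension $d-1 \geq k$ and $\dim\tau \leq k$, any facet of $\Delta$ containing $\tau$ also contains at least one $k$-face through $\tau$, and every such $k$-face is removed in passing to $\Delta'$; therefore $f_k(\Delta') \leq f_k(\Delta) - 1$, and the inductive hypothesis gives $\diam(\Delta') \leq 2 f_k(\Delta')$.

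The heart of the argument is what I will call the \emph{adjacency claim}: every facet $\sigma$ of $\Delta$ containing $\tau$ is joined by a ridge to some facet of $\Delta$ not containing $\tau$. To prove it, choose any $v \in \tau$ and consider the codimension-one subface $\rho \eqdef \sigma \ssm \{v\}$. Then $\rho$ does not contain $\tau$ (it omits $v$), so $\rho \in \Delta'$, and by purity of $\Delta'$ the face $\rho$ extends to some $(d-1)$-face $\tilde{\sigma}$ of $\Delta'$. This $\tilde{\sigma}$ has the right dimension to be a facet of $\Delta$, it does not contain $\tau$, and it shares the ridge $\rho$ with $\sigma$.

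The conclusion follows by a three-way case split on two arbitrary facets $\sigma_1, \sigma_2$ of $\Delta$, after noting that facets of $\Delta$ not containing $\tau$ are exactly the facets of $\Delta'$. If neither $\sigma_i$ contains $\tau$, their distance in $\Delta$ is at most their distance in $\Delta'$, which is at most $2 f_k(\Delta')$. If one or both contain $\tau$, apply the adjacency claim to replace each such $\sigma_i$ by a neighbor $\tilde{\sigma}_i$ of $\Delta'$ at a cost of at most $1$ per replacement, and then bound the distance between the $\tilde{\sigma}_i$ inside $\Delta'$. In the worst case the total is at most $2 + 2 f_k(\Delta') \leq 2 + 2(f_k(\Delta) - 1) = 2 f_k(\Delta)$.

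The only subtle point is the adjacency claim: one must be sure that the ridge $\rho$ really does extend to a $(d-1)$-face \emph{outside} the star of $\tau$, and this is exactly why the definition of weak decomposability demands that $\del{\Delta}{\tau}$ be $(d-1)$-dimensional (and, through the recursion, pure). Once that detail is secured, the induction and bookkeeping are routine, and no use of the link of $\tau$ is required, which is what distinguishes this bound from the stronger $f_k(\Delta) - \binom{d}{k+1}$ bound for full $k$-decomposability.
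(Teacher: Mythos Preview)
The paper does not supply its own proof of this statement; it merely quotes the result from Provan and Billera~\cite{ProvanBillera} as background. Your argument is correct and is essentially the standard Provan--Billera proof: induct on $f_k$, use purity of $\del{\Delta}{\tau}$ to push any facet containing $\tau$ across a ridge into $\del{\Delta}{\tau}$, and then apply the inductive hypothesis, paying at most two extra steps. There is nothing to compare against in the present paper.
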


In particular, any $0$-decomposable complex (also called \defn{vertex-decomposable}) satisfies the Hirsch conjecture while any weakly $0$-decomposable complex (also called \defn{weakly vertex-decomposable}) has diameter bounded by a linear function of its number of vertices.
Klee and Kleinschmidt~\cite[Proposition 6.3]{Klee-Kleinschmidt} pointed out that Lockeberg~\cite{Lockeberg} had constructed a $4$-dimensional polytope on $12$ vertices that was not vertex-decomposable (but still satisfied the Hirsch bound), while the non-Hirsch polytopes of Santos \cite{Santos} and Matschke-Santos-Weibel \cite{Hirsch-Improved} provide further examples of simplicial polytopes that are not vertex-decomposable.
In a recent paper~\cite{DeLoeraKlee}, De~Loera and the second author provided the first examples of simplicial polytopes that are not even weakly vertex-decomposable.
Their smallest counterexample is a $4$-dimensional polytope on $10$ vertices and $30$ facets, obtained as the polar of the intersection of the $5$-dimensional cube~$[-1,1]^5$ with the linear hyperplane of equation~$\sum_i x_i = 0$.

Recently, a great deal of effort has been put forth in trying to conjecture what the correct upper bound on polytope diameters should be.
Santos's non-Hirsch polytopes all have diameter~$(1+\epsilon)(n-d)$, while the best known diameter bounds are quasi-exponential in~$n$ and~$d$~\cite{Kalai-Kleitman} or linear in fixed dimension but exponential in~$d$~\cite{Barnette, Larman}.
In the hopes of establishing a polynomial diameter bound, it is natural to ask whether there is a constant~$k$ such that all simplicial polytopes are (weakly) $k$-decomposable.
We will answer this question in the negative by establishing that for any~$k \geq 0$, there exist simplicial polytopes that are not weakly $k$-decomposable.
Our counterexamples are motivated by the counterexamples introduced by De Loera and Klee \cite{DeLoeraKlee}, which arise as the polars of a certain family of simple transportation polytopes.  The necessary background on transportation polytopes is briefly presented below.

\subsection{Transportation polytopes}

Transportation polytopes are classical polytopes from optimization.
Here we recall their basic definitions and combinatorial properties and refer to classical surveys on the topic for more details and proofs~\cite{KleeWitzgall, YemelichevKovalevKravtsov}.

\begin{definition}
Fix two integers~$m,n \ge 1$, and two vectors~$\bm \in \R^m$ and~$\bn \in \R^n$.
The \defn{$m \times n$ transportation polytope} $P(\bm,\bn)$ is the collection of all non-negative matrices~$X \eqdef (x_{\mu,\nu}) \in \R^{m \times n}_{\ge 0}$ such that
$$\forall \mu \in [m], \quad \sum_{\nu \in [n]} x_{\mu,\nu} = \bm_{\mu} \qquad \text{and} \qquad \forall \nu \in [n],\quad \sum_{\mu \in [m]} x_{\mu,\nu} = \bn_{\nu}.$$
The vectors~$\bm$ and~$\bn$ are called the \defn{margins} of~$P(\bm,\bn)$.
\end{definition}

Intuitively, a point in the transportation polytope~$P(\bm,\bn)$ is an assignment of weights to be transported between~$m$ sources and~$n$ sinks on the edges of the complete bipartite graph~$K_{m,n}$ such that the total quantity that a source~$\mu$ provides corresponds to its supply~$\bm_{\mu}$ while the total quantity that a sink~$\nu$ receives corresponds to its demand~$\bn_{\nu}$.
The \defn{support} of a point~$X \eqdef (x_{\mu,\nu})$ of~$P(\bm,\bn)$ is the subgraph~$\supp(X) \eqdef \set{(\mu,\nu) \in K_{m,n}}{x_{\mu,\nu} > 0}$ of the complete bipartite graph~$K_{m,n}$.
The following theorem summarizes classical properties of transportation polytopes.
See~\cite{KleeWitzgall, YemelichevKovalevKravtsov} for proofs.

\begin{theorem}
\label{theo:propertiesTP}
The transportation polytope~$P(\bm,\bn)$, with margins~$\bm \in \R^m$ and $\bn \in \R^n$, has the following properties.

\begin{description}
\setlength{\itemsep}{1mm}
	\item[Feasability] $P(\bm,\bn)$ is non-empty if and only if~$\sum_{\mu \in [m]} \bm_{\mu} = \sum_{\nu \in [n]} \bn_{\nu}$.
	\item[Dimension] The dimension of~$P(\bm, \bn)$ is~$(m-1)(n-1)$, provided it is nonempty.
	\item[Non-degeneracy] $P(\bm,\bn)$ is nondegenerate (hence simple) if and only if there are no proper subsets~$\emptyset \subsetneq M \subsetneq [m]$ and~$\emptyset \subsetneq N \subsetneq [n]$ such that $\sum_{\mu \in M} \bm_{\mu} = \sum_{\nu \in N} \bn_{\nu}$. The support of the points of~$P(\bm,\bn)$ are then connected and spanning.
	\item[Vertices] A point of~$P(\bm,\bn)$ is a vertex of~$P(\bm,\bn)$ if and only if its support is a forest (\ie~contains no cycle). In particular, if~$P(\bm,\bn)$ is nondegenerate, a point of~$P(\bm,\bn)$ is a vertex if and only if its support is a spanning tree of~$K_{m,n}$.
	\item[Facets] All facets of~$P(\bm,\bn)$ are of the form~$F_{\mu,\nu} \eqdef \set{X \in P(\bm,\bn)}{x_{\mu,\nu} = 0}$ for some~$\mu \in [m]$ and~$\nu \in [n]$. Moreover, if~$mn > 4$, then the set~$F_{\mu,\nu}$ is a facet if and only if~$\bm_\mu + \bn_\nu < \sum_{\mu' \in [m]} \bm_{\mu'} = \sum_{\nu' \in [n]} \bn_{\nu'}.$
\end{description}
\end{theorem}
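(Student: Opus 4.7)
The plan is to verify each of the five claims in turn using standard tools from linear programming and bipartite graph theory; all of them are classical and follow from elementary arguments once the right language is in place.

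For \emph{Feasibility}, I would double-count: summing the row equations and summing the column equations both evaluate to $\sum_{\mu,\nu} x_{\mu,\nu}$, forcing $\sum_\mu \bm_\mu = \sum_\nu \bn_\nu$; conversely, when this common value is $s > 0$, the outer product $x_{\mu,\nu} \eqdef \bm_\mu \bn_\nu / s$ witnesses non-emptiness (with the zero matrix handling $s = 0$). For \emph{Dimension}, the $m+n$ defining equations carry a single linear dependency (sum of rows equals sum of columns), so the ambient affine subspace has dimension $mn - (m+n) + 1 = (m-1)(n-1)$; after reducing to the case of strictly positive margins, the outer product above lies in its relative interior, confirming equality.

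For the \emph{Vertex} characterization, I would apply the standard basic-feasible-solution argument: a cycle in $\supp(X)$ produces a one-parameter family in $P(\bm,\bn)$ by alternately adding $\pm \varepsilon$ along its edges, so vertices must have acyclic support; conversely, a forest support determines the entries uniquely by iteratively peeling leaves and reading off their weights from the margins. \emph{Non-degeneracy} is then immediate: simplicity is equivalent to each vertex having support of size exactly $m+n-1$ (a spanning tree of $K_{m,n}$), and a proper partial sum equality $\sum_{\mu \in M} \bm_\mu = \sum_{\nu \in N} \bn_\nu$ disconnects feasible supports along $M \sqcup N$, producing vertices of support-size at most $m+n-2$; conversely, any disconnected forest-support of a vertex induces such a partial sum equality.

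Finally, for \emph{Facets}, the only candidates are the $F_{\mu,\nu}$ since the $x_{\mu,\nu} \ge 0$ are the only inequalities. I would split into three cases: if $\bm_\mu + \bn_\nu > s$, then the row and column constraints force $x_{\mu,\nu} \ge \bm_\mu + \bn_\nu - s > 0$ so $F_{\mu,\nu}$ is empty; if $\bm_\mu + \bn_\nu = s$, a short counting argument shows that $x_{\mu,\nu} = 0$ forces every entry outside row $\mu$ and column $\nu$ to vanish, so $F_{\mu,\nu}$ collapses to the single point with $x_{\mu,\nu'} = \bn_{\nu'}$ and $x_{\mu',\nu} = \bm_{\mu'}$, which is a facet only in the excluded regime $(m-1)(n-1) = 1$; and if $\bm_\mu + \bn_\nu < s$, I would construct a relative interior point of $F_{\mu,\nu}$ with $x_{\mu,\nu} = 0$ and all other entries strictly positive, certifying that $F_{\mu,\nu}$ has codimension one. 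The main obstacle I anticipate is this last construction together with the boundary case $\bm_\mu + \bn_\nu = s$, which together pin down exactly why $mn > 4$ must be assumed in the facet classification.
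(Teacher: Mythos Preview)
Your outline is correct and follows the standard textbook route for these classical facts. Note, however, that the paper does not actually supply a proof of this theorem: it states the result as classical and refers the reader to the surveys of Klee--Witzgall and Yemelichev--Kovalev--Kravtsov. So there is no ``paper's own proof'' to compare against; what you have written is essentially the argument one finds in those references, and it is sound.

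One small point worth tightening in your non-degeneracy sketch: the phrase ``disconnects feasible supports along $M \sqcup N$'' is a little loose. What you want to say is that a partial-sum equality $\sum_{\mu\in M}\bm_\mu=\sum_{\nu\in N}\bn_\nu$ allows you to solve the two smaller transportation problems on $M\times N$ and $M^c\times N^c$ independently, and splicing their vertices together yields a vertex of $P(\bm,\bn)$ whose support has at least two components and hence at most $m+n-2$ edges. The converse direction (a disconnected forest support induces such a partial sum) is exactly as you describe. Everything else is fine as stated.
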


Optimizing transportation costs naturally gives rise to linear optimization problems on transportation polytopes, and thus leads to the question to evaluate the diameter of transportation polytopes.
Although this question has been largely studied in the literature, the Hirsch conjecture for transportation polytopes is still open.
On the one hand, Brightwell, van den Heuvel, and Stougie~\cite{BrightwellvdHeuvelStougie} showed that the diameter of any $m \times n$ transportation polytope is at most~$8(m+n-1)$, \ie eight times the Hirsch bound.
On the other hand, the precise Hirsch bound was only proven for~$2 \times n$ transportation polytopes by Kim~\cite[Section~3.5]{Kim} and for the family of signature polytopes, defined as follows by Balinski and Rispoli~\cite{BalinskiRispoli}.

\begin{definition}[\cite{BalinskiRispoli}]
\label{def:signaturePolytope}
Fix a vector~$\bd \in \R^m$ such that~$\sum_{\mu \in [m]} \bd_\mu = m+n-1$.
Let~$\mathcal{T}(\bd)$ denote the set of all spanning trees of the complete bipartite graph~$K_{m,n}$ where each vertex~$\mu \in [m]$ has degree~$\bd_\mu$.
An $m \times n$ transportation polytope~$P(\bm, \bn)$ is a \defn{$\bd$-signature polytope} if any tree of~$\mathcal{T}(\bd)$ is the support of a vertex of~$P(\bm,\bn)$.
\end{definition}

Controlling the feasible trees of signature polytopes, Balinski and Rispoli~\cite{BalinskiRispoli} prove that these polytopes satisfy the monotone Hirsch conjecture.
In particular, for any vector~$\bd \in \R^m$ such that~$\sum_{\mu \in [m]} \bd_\mu = m+n-1$, observe that the $m \times n$ transportation polytope~$P(\bm,\bn)$ with margins ${\bm \eqdef m\bd - (m+1)\one \in \R^m}$ and~$\bn \eqdef m\one \in \R^n$ is a signature polytope and thus satisfies the monotone Hirsch conjecture. These particular examples of signature polytopes were previously studied by Balinski~\cite{Balinski}.

\section{Simplicial polytopes that are not weakly $k$-decomposable}
\label{section:non-decomposable}

In this section, we study a certain family of simple $2 \times n$ transportation polytopes whose polars are not weakly $k$-decomposable for sufficiently large $n$.

\begin{definition}
\label{def:DeltaAB}
Fix integers~$a,b \geq 1$ and define~$\Delta(a,b)$ to be the boundary complex of the polar polytope to the $2 \times (a+b+1)$ transportation polytope~$P(\bm,\bn)$ with margins
$$\bm \eqdef (2a+1,2b+1) \in \R^2 \qquad \text{and} \qquad \bn  \eqdef (2,2,\dots,2) \in \R^{a+b+1}.$$
\end{definition}

According to Theorem~\ref{theo:propertiesTP}, $P(\bm,\bn)$ is a simple polytope of dimension~$a+b$ whose facets are precisely the sets~$F_{\mu,\nu} \eqdef \set{X \in P(\bm,\bn)}{x_{\mu,\nu} = 0}$, for~$\mu \in [2]$ and~$\nu \in [a+b+1]$.
For simplicity, we decompose the vertex set of $\Delta(a,b)$ into sets $U \eqdef \{u_1, \dots, u_{a+b+1}\}$ and $V \eqdef \{v_1, \dots, v_{a+b+1}\}$ where $u_\nu$ is the polar vertex to the facet $F_{1,\nu}$ and $v_\nu$ is the polar vertex to the facet $F_{2,\nu}$.
That is to say, the vertices of $U$ correspond to the coordinates of the top row of the $2 \times n$ contingency table defining $P(\bm, \bn)$ and the vertices of $V$ correspond to the coordinates of the bottom row.
Theorem~\ref{theo:propertiesTP} yields the following description of the facets of~$\Delta(a,b)$.

\begin{lemma}
The facets of~$\Delta(a,b)$ are precisely the sets~$A \cup B$ where $A \subseteq V$, $B \subseteq U$, $|A| = a$, $|B| = b$, and $A \cup B$ contains at most one element from each pair $\{u_\nu,v_\nu\}$.
\end{lemma}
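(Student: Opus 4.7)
The plan is to exploit the polarity: facets of $\Delta(a,b)$ correspond bijectively to vertices of $P(\bm,\bn)$, which by Theorem~\ref{theo:propertiesTP} are in bijection with spanning trees of $K_{2,a+b+1}$ once nondegeneracy is established. I would first verify nondegeneracy via the criterion in Theorem~\ref{theo:propertiesTP}: any proper nonempty sub-sum of $\bm = (2a+1, 2b+1)$ is odd, while every proper nonempty sub-sum of $\bn = (2,\dots,2)$ is even, so no coincidence is possible.

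Next I would classify spanning trees by a degree count. A spanning tree $T$ of $K_{2,a+b+1}$ has $a+b+2$ edges, contributing that same total degree on the right. Since each of the $a+b+1$ right-vertices has degree at least one, exactly one right-vertex $\nu^*$ has degree two and all others are leaves. Thus $T$ is encoded by a ``connector'' $\nu^*$ together with a partition $[a+b+1] \setminus \{\nu^*\} = A' \sqcup B'$ of the remaining indices between left-vertex $1$ and left-vertex $2$.

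Enforcing the margin constraints then pins down the sizes: each column sum~$2$ forces $x_{1,\nu} = 2$ for $\nu \in A'$ and $x_{2,\nu} = 2$ for $\nu \in B'$, so the row sums yield $x_{1,\nu^*} = 2a+1 - 2|A'|$ and $x_{2,\nu^*} = 2b+1 - 2|B'|$. Positivity of both, together with $|A'| + |B'| = a+b$, forces $|A'| = a$ and $|B'| = b$; conversely any such triple $(\nu^*, A', B')$ produces a valid vertex (with $x_{1,\nu^*} = x_{2,\nu^*} = 1$).

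Finally I would translate through polarity: $u_\nu$ lies in the facet of $\Delta(a,b)$ attached to $T$ precisely when $x_{1,\nu} = 0$, \ie when $\nu \in B'$, and symmetrically $v_\nu$ lies in this facet precisely when $\nu \in A'$. The facet is therefore $\{v_\nu : \nu \in A'\} \cup \{u_\nu : \nu \in B'\}$, which has exactly the form $A \cup B$ described in the lemma (with $\nu^*$ the unique index contributing neither $u_{\nu^*}$ nor $v_{\nu^*}$), and conversely every such $A \cup B$ comes from a unique $(\nu^*, A', B')$. The only mildly subtle point is the margin feasibility step forcing the balanced split $|A'|=a$, $|B'|=b$; everything else is careful bookkeeping between spanning trees, vertices, and polar facets.
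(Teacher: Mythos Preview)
Your proposal is correct and follows essentially the same route as the paper: both arguments use polarity to identify facets of $\Delta(a,b)$ with vertices of $P(\bm,\bn)$ and then invoke the margin constraints (row sums $2a+1$, $2b+1$ against column entries that are $0$, $1$, or $2$) to force the balanced split $|A|=a$, $|B|=b$. Your explicit verification of nondegeneracy via the odd/even parity of partial sums is a useful addition that the paper leaves implicit, and your spanning-tree framing of $K_{2,a+b+1}$ makes the argument slightly more systematic, but the core computation is identical to the paper's.
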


\begin{proof}
Consider a facet~$F$ of $\Delta(a,b)$.
Let~$X \eqdef (x_{\mu,\nu})$ be the vertex of~$P(a,b)$ polar to~$F$, let~$A \eqdef F \cap V$, and let $B \eqdef F \cap U$.
If~$v_\nu \in A$, then $x_{2,\nu} = 0$, and thus $x_{1,\nu} = 2$.
Consequently, $2a+1 = \sum_\nu x_{1,\nu} \ge 2|A|$.
Similarly $2b+1 \ge 2|B|$.
Since $|A|+|B| = a+b$, we obtain that~$|A| = a$ and~$|B| = b$.
Finally, $F$ cannot contain both $u_\nu$ and $v_\nu$ since~$X$ cannot have $x_{1,\nu} = x_{2,\nu} = 0$.

Conversely, let~$A \subseteq V$ and~$B \subseteq U$ be such that $|A| = a$, $|B| = b$, and $A \cup B$ contains at most one element from each pair $\{u_\nu,v_\nu\}$.
Let~$\bar B \eqdef \set{\nu}{u_\nu \in B}$ and $\bar A \eqdef \set{\nu}{v_\nu \in A}$.
Then~$\bar A$ and~$\bar B$ are disjoint subsets of~$[a+b+1]$ and their complement~$[a+b+1] \ssm (\bar A \cup \bar B)$ has a unique element~$\bar \nu$.
Define~$X \eqdef (x_{\mu,\nu})$ by
$$x_{1,\nu} \eqdef \begin{cases} 1 & \text{if } \nu = \bar\nu \\ 2 & \text{if } \nu \in \bar A \\ 0 & \text{if } \nu \in \bar B \end{cases} \qquad \text{and} \qquad x_{2,\nu} \eqdef \begin{cases} 1 & \text{if } \nu = \bar\nu \\ 0 & \text{if } \nu \in \bar A \\ 2 & \text{if } \nu \in \bar B \end{cases}.$$
The point~$X$ is a vertex of~$P(\bm,\bn)$ since it is a point of~$P(\bm,\bn)$ whose support is a spanning tree.
Moreover, its polar facet in~$\Delta(a,b)$ is~$A \cup B$.
\end{proof}

\begin{remark}
We can alternatively describe the transportation polytope~$P(\bm,\bn)$ of Definition~\ref{def:DeltaAB} as the intersection~$P$ of the $(a+b+1)$-dimensional cube~$C \eqdef [0,2]^{a+b+1}$ with the hyperplane~$H$ of equation~$\sum_i x_i = 2a+1$.
The $2(a+b+1)$ facets of~$P$ are supported by the hyperplanes supporting the facets of the cube~$C$.
The $\frac{(a+b+1)!}{a!b!}$ vertices of~$P$ are the intersections of the hyperplane~$H$ with the edges of the cube~$C$ at distance~$2a$ from the origin.
Equivalently, $P$ is the Minkowski sum of the hypersimplices~$\triangle_a + \triangle_{a+1}$, where the \defn{hypersimplex}~$\triangle_a$ is the intersection of the cube~$[0,1]^{a+b+1}$ with the hyperplane~$\sum_i x_i=a$.
\end{remark}

According to the above-mentioned results on~$2 \times n$ transportation polytopes \cite[Section~3.5]{Kim} and on signature polytopes~\cite{Balinski, BalinskiRispoli}, the transportation polytope~$P(\bm,\bn)$ satisfies the (monotone) Hirsch conjecture.
In contrast, the results of~\cite[Theorem 3.1]{DeLoeraKlee} imply that the boundary complex~$\Delta(a,b)$ of the polar of~$P(\bm,\bn)$ is not weakly vertex-decomposable when~$a,b \geq 2$.
We now refine the analysis of the weak decompositions of~$\Delta(a,b)$ to derive the following theorem.

\begin{theorem}
\label{theo:nonWeaklyKDecomposable}
If~$\left(\frac{k+3}{2}\right)^2 \le \min(a,b)$, then the simplicial complex~$\Delta(a,b)$ of Definition \ref{def:DeltaAB} is not weakly $k$-decomposable.
\end{theorem}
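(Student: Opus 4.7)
The plan is a proof by contradiction: suppose $\Delta(a,b)$ admits a weak $k$-decomposition $\tau_1, \ldots, \tau_N$ with each $\Delta_i \eqdef \del{\Delta_{i-1}}{\tau_i}$ pure of dimension $a+b-1$, and with $\Delta_N$ a $(d-1)$-simplex $2^{F^*}$ on some facet $F^* \in \Delta(a,b)$. Let $\nu^*$ be the star index of $F^*$. Since the vertex set of $\Delta_N$ must equal $F^*$, every vertex outside $F^*$ has to appear in the sequence as a singleton shedding face. There are $a+b+2$ such ``bad'' vertices: the pair $u_{\nu^*}, v_{\nu^*}$, plus, at each non-star index $\nu$, the unique vertex of $\{u_\nu, v_\nu\}$ opposite to the one selected in $F^*$.

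My first goal is to translate the purity requirement into an explicit local condition at each step. Using the facet description from the preceding lemma, when a shedding face $\tau$ is applied to a facet $\sigma \supseteq \tau$ of $\Delta_{i-1}$, the face $\sigma \ssm \tau$ extends to a full-dimensional facet of $\Delta_i$ if and only if an admissible ``swap'' exists at the indices freed by $\tau$. One can describe these candidate swap vertices explicitly, and purity fails at this step precisely when every such candidate has been removed by a previous $\tau_j$ or coincides with $\tau$ itself.

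Second, I would introduce a potential function $\Phi_i$ counting ``critical index pairs'' alive in $\Delta_i$: pairs of distinct indices $(\nu, \mu) \in [a+b+1]^2$ for which the swap-failure mechanism above is latent, in the sense that any future attempt to delete the bad vertex at index $\mu$ after star index $\nu$ has been sealed would break purity. The three key claims are: $\Phi_0$ is at least of order $\min(a,b)$, since many independent critical pairs are witnessed by facets of $\Delta(a,b)$; $\Phi_N = 0$, since only $F^*$ remains; and a single shedding face $\tau$ of dimension at most $k$ can reduce $\Phi$ by at most $\left(\frac{k+1}{2}\right)^2$, obtained from an AM--GM estimate on the product $|\tau \cap U| \cdot |\tau \cap V|$ under the constraint $|\tau \cap U| + |\tau \cap V| \le k+1$. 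This quadratic-in-$k$ bound on the per-step decrease is what produces the $\left(\frac{k+3}{2}\right)^2$ threshold in the theorem.

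Combining the three estimates, and carefully bookkeeping the separate (small) contributions from the $a+b+2$ unavoidable vertex-singleton deletions, I expect to obtain $\min(a,b) < \left(\frac{k+3}{2}\right)^2$, contradicting the hypothesis. The main obstacle, and the most delicate part of the proof, is constructing the invariant $\Phi$ so that a shedding face really costs only a quadratic-in-$k$ amount of potential and so that $\Phi > 0$ really does force a future purity violation. This requires a careful inductive tracking of the interaction between the forced singleton deletions and the larger bulk shedding faces, and it exploits both the bipartite $U \leftrightarrow V$ symmetry of $\Delta(a,b)$ and the fact that vertices of the underlying transportation polytope $P(\bm,\bn)$ correspond to spanning trees of $K_{2, a+b+1}$.
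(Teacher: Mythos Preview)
Your potential-function outline has a structural gap that I do not see how to close. You propose to track a quantity $\Phi_i$ with $\Phi_0$ of order $\min(a,b)$, $\Phi_N = 0$, and a per-step decrement bounded by roughly $\left(\frac{k+1}{2}\right)^2$. But these three facts together only yield that $N \cdot \left(\frac{k+1}{2}\right)^2$ is at least of order $\min(a,b)$, i.e.\ a \emph{lower} bound on the length $N$ of the shedding sequence, not the inequality $\min(a,b) < \left(\frac{k+3}{2}\right)^2$ you are after. Nothing in the definition of weak $k$-decomposability bounds $N$: one may shed arbitrarily many faces of size up to $k+1$ before reaching the final simplex. So unless your undefined invariant $\Phi$ can decrease at only a bounded number of steps---and you give no mechanism for this---the accounting does not produce a contradiction. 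The AM--GM bound on $|\tau\cap U|\cdot|\tau\cap V|$ is a nice observation, but it only controls what a single shedding face touches, not the cumulative effect of an unbounded sequence.

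The paper's argument avoids this altogether by not summing over the sequence. Instead it looks at the \emph{first} index $i$ at which some subset $S\subseteq U$ (or $S\subseteq V$) acquires corank at least $2$ in $\Delta_i$. At that moment the collection $\mathcal{X}$ of previously shed faces contained in $S$ must have empty intersection (else removing a common element would exhibit corank $\le 1$). A short Helly-type lemma then shows that any family of sets of size at most $k+1$ with empty intersection contains a subfamily with empty intersection whose union has size at most $\left(\frac{k+3}{2}\right)^2$. Replacing $S$ by that union gives $|S|\le b$; one then chooses $T\subseteq V$ of size $a+1$ index-disjoint from $S$, extracts a face $A\subseteq T$ with $|A|=a$ still alive in $\Delta_i$, and checks directly that no $B\subseteq U$ can complete $A$ to a facet of $\Delta_i$ (any such $B$ would meet $S$ in at least $|S|-1$ points, forcing the corank of $S$ to be at most $1$). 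Thus purity fails at step $i$. The quadratic $\left(\frac{k+3}{2}\right)^2$ enters once, through the hitting-set lemma, rather than being amortized over the whole sequence.
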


\begin{corollary}
There exist $d$-dimensional simplicial polytopes whose boundary complexes are not weakly $O(\sqrt{d})$-decomposable, hence not strongly $O(\sqrt{d})$-decomposable.
\end{corollary}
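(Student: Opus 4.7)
The plan is to argue by induction on $k$. For the base case $k=0$, the threshold $((k+3)/2)^2 = 9/4$ forces $\min(a,b) \ge 3$, which is well within the range $\min(a,b) \ge 2$ covered by the De Loera--Klee result~\cite{DeLoeraKlee} that $\Delta(a,b)$ is not weakly vertex-decomposable. Thus the base case reduces to an already-established theorem.

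For the inductive step, assume the result is known for $k-1$ and suppose, toward a contradiction, that $\Delta(a,b)$ with $\min(a,b) \ge ((k+3)/2)^2$ admits a weak $k$-decomposition, beginning with a shedding face $\tau$. Writing $\tau = A_\tau \cup B_\tau$ with $A_\tau \subseteq V$ and $B_\tau \subseteq U$, the face $\tau$ touches at most $k+1$ indices of $[a+b+1]$; by the $S_{a+b+1}$-symmetry of $\Delta(a,b)$ I may assume that these indices form an initial segment. The plan is to locate, inside $\Delta(a,b) - \tau$, a distinguished subcomplex $\Delta' \cong \Delta(a', b')$ --- obtained either as the link of a carefully chosen extension of $\tau$ or as a restriction to the untouched indices --- and to inherit from the assumed weak $k$-decomposition of $\Delta(a,b)$ a weak $(k-1)$-decomposition of $\Delta'$. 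Applying the inductive hypothesis to $\Delta(a', b')$ will then yield the contradiction, provided $\min(a', b') \ge ((k+2)/2)^2$.

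The arithmetic behind the threshold is transparent once this scheme is in place: the allowed loss is $((k+3)/2)^2 - ((k+2)/2)^2 = (2k+5)/4$, so one can afford to drop up to about $(k+1)/2$ from each of $a$ and $b$. This matches the natural choice $a' = a - |A_\tau|$, $b' = b - |B_\tau|$ (with $|A_\tau| + |B_\tau| \le k+1$), plus a small margin for auxiliary bookkeeping required to absorb the few indices that serve to select $\Delta'$ inside $\Delta(a,b) - \tau$.

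The principal obstacle is the descent to shedding dimension $k-1$ on $\Delta'$. Weak decomposability, unlike its strong counterpart, does not automatically pass to links, so some structural input is needed. The argument must exploit the recursive product-like nature of $\Delta(a,b)$ --- namely, that its links of faces are themselves complexes $\Delta(a', b')$ on fewer indices, and that each facet of $\Delta(a,b)$ restricts in a predictable way to such a link --- to translate a $(\le k)$-dimensional shedding sequence for $\Delta(a,b) - \tau$ into a $(\le k-1)$-dimensional shedding sequence for $\Delta'$. Heuristically, the $k+1$ ``coordinate directions'' that $\tau$ consumes have already exhausted one unit of ``shedding freedom'', so any further shedding face interacting non-trivially with $\Delta'$ must itself be smaller by at least one. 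Making this precise, and tracking how higher-dimensional shedding faces --- which do not kill vertices but still prune facets --- interact with $\Delta'$, is the most delicate part of the argument and the place where a careful case analysis of the possible shapes of subsequent shedding faces will be required.
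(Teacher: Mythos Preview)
Your proposal misidentifies the target. The corollary is an immediate consequence of Theorem~\ref{theo:nonWeaklyKDecomposable}: the polytope polar to $P(\bm,\bn)$ has dimension $d=a+b$, so taking $a=b=\lfloor d/2\rfloor$ the theorem rules out weak $k$-decomposability for all $k$ with $(k+3)^2/4\le \lfloor d/2\rfloor$, i.e.\ for $k\le \sqrt{2d}-3$. That is the entire argument; the paper states the corollary without proof for this reason.

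What you have written is instead a sketch of an alternative proof of the theorem itself, by induction on $k$, and this sketch has a genuine gap at exactly the point you flag as ``the most delicate part''. There is no mechanism for passing from a weak $k$-decomposition of $\Delta(a,b)$ to a weak $(k-1)$-decomposition of some $\Delta(a',b')$. Removing the first shedding face $\tau$ leaves $\Delta(a,b)-\tau$ with a weak $k$-decomposition, not a $(k-1)$-decomposition; the remaining shedding faces $\tau_2,\tau_3,\dots$ may all have size $k+1$ and may lie entirely in the indices untouched by $\tau$, so they do not become smaller when restricted to any subcomplex $\Delta'$ supported on those indices. Your heuristic that $\tau$ ``exhausts one unit of shedding freedom'' has no formal content here: weak decomposability imposes no link condition, so nothing forces later shedding faces to interact with~$\tau$ at all. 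The arithmetic bookkeeping you outline is therefore premature---the quantitative margin $(2k+5)/4$ is irrelevant until one has a reason why the shedding dimension drops.

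By contrast, the paper's proof of the theorem avoids induction on $k$ entirely. It runs the full shedding sequence $\tau_1,\dots,\tau_t$, tracks the corank of subsets $S\subseteq U$ or $S\subseteq V$ in the successive deletions, and isolates the first step $i$ at which some corank jumps to $2$. The key combinatorial input is Lemma~\ref{lem:hitting-sets}: since the shedding faces contained in $S$ have empty common intersection (else deleting the common element would keep corank $\le 1$), a sub-collection of them with empty intersection has union of size at most $((k+3)/2)^2$. This bounds $|S|$ and allows one to exhibit a face of $\Delta_i$ not contained in any facet, contradicting purity. The threshold $((k+3)/2)^2$ thus arises from an extremal set-system lemma, not from an inductive descent.
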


To prove Theorem~\ref{theo:nonWeaklyKDecomposable}, we will suppose by way of contradiction that $\Delta(a,b)$ is weakly $k$-decomposable with shedding sequence $\tau_1, \tau_2, \dots, \tau_t$.
We set $\Delta_0 \eqdef \Delta(a,b)$ and ${\Delta_i \eqdef \del{\Delta_{i-1}}{\tau_i}}$, for~$i \geq 1$.
Each $\Delta_i$ is a pure simplicial complex of dimension $a+b-1$ and $\Delta_t$ consists of a single $(a+b-1)$-simplex.
Our goal now is to understand how shedding each of the faces~$\tau_i$ affects certain subsets of vertices of~$\Delta(a,b)$.
With this motivation, we define a family of functions
$$\varphi_i: {U \choose \leq b+1} \cup {V \choose \leq a+1} \rightarrow \mathbb{N}, $$
by setting $\varphi_i(S)$ to be the corank of the set of vertices $S$ in the complex $\Delta_i$, \ie
$$\varphi_i(S) \eqdef |S| - \max \set{|\tau|}{\tau \subset S, \; \tau \in \Delta_i}.$$
Here we use the notation~${W \choose \leq w}$ to denote the collection of subsets of the set~$W$ of size at most~$w$.
The following properties of the functions $\{\varphi_i\}_{i \in [t]}$ are immediate.

\begin{lemma}\label{lem:properties-of-phi-functions}
\begin{enumerate}
\item $\varphi_0(S) \leq 1$ for all $S$.
\item There exists a set $S$ such that $\varphi_t(S) \geq 2$.
\item If $\tau_i \cap U \neq \emptyset$ and $\tau_i \cap V \neq \emptyset$, then $\varphi_i(S) = \varphi_{i-1}(S)$ for all $S$.
\item If $\tau_i \subseteq U$, then $\varphi_i(S) = \varphi_{i-1}(S)$ for all $S \in {V \choose \leq a+1}$.
\item $\varphi_i(S) \leq 1$ if and only if there is some $v \in S$ such that $S \ssm v \in \Delta_i$.
\end{enumerate}
\end{lemma}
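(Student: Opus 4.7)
The plan is to verify the five items in turn: (1) and (2) exploit the explicit facet description from the preceding lemma, (3) and (4) are direct consequences of the definition of deletion, and (5) is just an unpacking of the definition of $\varphi_i$.

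For (1), the key observation is that any $B \subseteq U$ with $|B| \le b$ is a face of $\Delta(a,b)$: the $|B|$ indices $\nu$ with $u_\nu \in B$ block at most $b$ of the $a+b+1$ pair-indices, leaving at least $a+1$ free indices from which to select an $A \subseteq V$ of size $a$, making $A \cup B$ a facet. Hence every $S \in {U \choose \le b}$ has corank $0$ in $\Delta_0$, while $S \in {U \choose b+1}$ contains a $b$-element subset that is a face, giving corank at most $1$; the symmetric argument handles $S \subseteq V$ with $|S| \le a+1$. For (2), $\Delta_t$ is a single $(a+b-1)$-simplex whose vertex set $T$ must be a facet of $\Delta(a,b)$, so $|T \cap U| = b$ and $|U \ssm T| = a+1 \ge 2$. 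Picking any two-element subset $S \subseteq U \ssm T$ (which lies in the domain since $b \ge 1$), the only subface of $S$ inside $\Delta_t$ is the empty face, so $\varphi_t(S) = 2$.

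Properties (3) and (4) both rest on the observation that passing from $\Delta_{i-1}$ to $\Delta_i$ only removes faces $\sigma$ satisfying $\tau_i \subseteq \sigma$. Since every $S$ in the domain lies entirely in $U$ or entirely in $V$, so does every subface $\sigma \subseteq S$; a $\tau_i$ meeting both sides (case 3), or a $\tau_i \subseteq U$ paired with $S \subseteq V$ (case 4), therefore cannot be contained in any such $\sigma$, and the maximum-size face of $\Delta_i$ inside $S$ coincides with the one inherited from $\Delta_{i-1}$. Finally, (5) is essentially tautological: $\varphi_i(S) \le 1$ precisely when some subface of $S$ of size at least $|S|-1$ lies in $\Delta_i$, and such a subface is either $S \ssm v$ for some $v \in S$, or $S$ itself, in which case every $S \ssm v$ is also in $\Delta_i$ by hereditariness.

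No step presents a genuine obstacle; the only points requiring a moment of care are the boundary case $|S| = b+1$ in (1), where $S$ itself may fail to be a face but some $b$-element subset always is, and the use of $a, b \ge 1$ in (2) to keep the chosen $S$ inside the domain ${U \choose \le b+1}$.
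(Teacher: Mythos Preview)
Your proof is correct. The paper itself declares these properties ``immediate'' and gives no proof, so your argument is strictly more detailed than what appears there; the only cosmetic slip is in (1), where you write ``making $A \cup B$ a facet'' even when $|B| < b$ --- in that case $A \cup B$ is merely a face, but this is all you need and the conclusion stands.
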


The following lemma will be essential to the proof of Theorem \ref{theo:nonWeaklyKDecomposable}.

\begin{lemma}\label{lem:hitting-sets}
Let $\mathcal{X}$ be a collection of sets, each of which has size at most $k+1$, and suppose $\bigcap_{X \in \mathcal{X}}X =\emptyset$.
Then there is a sub-collection $\mathcal{Y} \subseteq \mathcal{X}$ such that
$$\bigcap\nolimits_{X \in \mathcal{Y}} X = \emptyset \qquad \text{and} \qquad \left|\bigcup\nolimits_{X \in \mathcal{Y}}X\right| \leq \left(\frac{k+3}{2}\right)^2.$$
\end{lemma}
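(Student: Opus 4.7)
The plan is to pass to an inclusion-minimal sub-collection $\mathcal{Y} \subseteq \mathcal{X}$ still satisfying $\bigcap_{X \in \mathcal{Y}} X = \emptyset$, and then to show that such a minimal family is automatically both small in cardinality and small in total union. First I would observe that a minimal $\mathcal{Y}$ can be chosen finite: fixing any $X_0 \in \mathcal{X}$ and, for each $x \in X_0$, picking some $X_x \in \mathcal{X}$ that avoids $x$ (which exists because $\bigcap_{X \in \mathcal{X}} X = \emptyset$), one gets a finite family $\{X_0\} \cup \{X_x : x \in X_0\}$ whose intersection is already empty; any inclusion-minimal sub-family of it is the desired $\mathcal{Y}$.

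The central idea is that minimality forces a system of distinct representatives. For each $X \in \mathcal{Y}$, removing $X$ from $\mathcal{Y}$ makes the intersection non-empty, so there is a witness element $x_X \in \bigcap_{X' \in \mathcal{Y} \ssm \{X\}} X'$ with $x_X \notin X$. The $x_X$'s are pairwise distinct, because $x_X \notin X$ while $x_{X'} \in X$ whenever $X' \neq X$. Writing $s \eqdef |\mathcal{Y}|$, each $X \in \mathcal{Y}$ therefore contains the $s-1$ distinct witnesses $\{x_{X'} : X' \neq X\}$, and combined with the hypothesis $|X| \leq k+1$ this already yields the cardinality bound $s \leq k+2$.

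To bound the union I would separate the witness elements from the rest. Let $W \eqdef \{x_X : X \in \mathcal{Y}\}$, so $|W| = s$, and note that each $X \in \mathcal{Y}$ contributes at most $|X| - (s-1) \leq k-s+2$ elements outside $W$. Summing gives
\begin{equation*}
\Bigl| \bigcup\nolimits_{X \in \mathcal{Y}} X \Bigr| \;\leq\; s + s(k-s+2) \;=\; s(k+3) - s^2,
\end{equation*}
a concave quadratic in $s$ whose maximum over the real line is attained at $s = (k+3)/2$ and equals $\bigl( \frac{k+3}{2} \bigr)^2$. Since this upper bound holds for every admissible value of $s$, the conclusion follows.

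I expect the main conceptual step to be spotting the \emph{minimality $\Rightarrow$ distinct witnesses} argument of the second paragraph: once this system of distinct representatives is in hand, both the cardinality bound $s \leq k+2$ and the union bound fall out by direct counting, and the final quadratic optimization is routine. The only minor technical point is the reduction to a finite sub-collection, which is handled cleanly by the fixed-$X_0$ construction above.
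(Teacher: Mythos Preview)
Your proof is correct and follows essentially the same route as the paper's: take an inclusion-minimal sub-collection, extract one distinct witness element per set by minimality, and then count to get the quadratic bound $s(k+3-s) \leq \left(\frac{k+3}{2}\right)^2$. The only addition is your explicit reduction to a finite sub-collection via the fixed-$X_0$ construction, a point the paper's proof leaves implicit.
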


\begin{proof}
Let $\mathcal{Y}$ be a minimal collection of sets of $\mathcal{X}$ such that $\bigcap_{X \in \mathcal{Y}}X = \emptyset$  (\ie any proper sub-collection of sets in $\mathcal{Y}$ has a non-empty intersection).

For each $X \in \mathcal{Y}$, there is an element $f(X) \in \bigcap_{Y \in \mathcal{Y} \ssm \{X\}}Y$ by our assumption that $\mathcal{Y}$ is minimal.
Furthermore, $f(X) \notin X$, since otherwise $\bigcap_{Y \in \mathcal{Y}}Y \neq \emptyset$.
Hence the elements $\set{f(X)}{X \in \mathcal{Y}}$ are all distinct.
This means any $X \in \mathcal{Y}$ contains each of the elements $f(Y)$ with $Y \in \mathcal{Y} \ssm \{X\}$ and at most $k + 2 - |\mathcal{Y}|$ other elements.
Thus
$$\left| \bigcup\nolimits_{X \in \mathcal{Y}}X \right| \; \leq \; |\mathcal{Y}| +  |\mathcal{Y}|\left(k + 2 - |\mathcal{Y}|\right) \; = \; |\mathcal{Y}|\left(k + 3 - |\mathcal{Y}|\right) \; \leq \; \left(\frac{k+3}{2}\right)^2,$$
since the quadratic function~$y \mapsto y(\alpha-y)$ is maximized when $y = \frac{\alpha}{2}$.
\end{proof}

\begin{remark}
Observe that the bound in the previous lemma is tight.
Consider all $\lfloor\frac{k+1}{2}\rfloor$-element subsets of a $(\lfloor\frac{k+1}{2}\rfloor+1)$-element set.
Add to each of these sets~$\lceil\frac{k+1}{2}\rceil$ new elements (all additional elements are distinct).
The intersection of the resulting~$\lfloor\frac{k+1}{2}\rfloor+1$ sets is empty, but the intersection of any proper sub-collection is non-empty. Moreover, their union has cardinality $(\lfloor\frac{k+1}{2}\rfloor+1)(\lceil\frac{k+1}{2}\rceil+1)$.
\end{remark}

We are now ready to prove Theorem \ref{theo:nonWeaklyKDecomposable}.

\begin{proof}[Proof of Theorem \ref{theo:nonWeaklyKDecomposable}]

By Lemmas~\ref{lem:properties-of-phi-functions}(1) and~\ref{lem:properties-of-phi-functions}(2), we can consider the smallest value $i \in [t]$ for which there exists some~$S$ with $\varphi_i(S) \geq 2$.
By Lemma~\ref{lem:properties-of-phi-functions}(3), the corresponding shedding face $\tau_i$ is a subset of either $U$ or $V$.
Without loss of generality, we may assume that $\tau_i \subseteq U$.

Let $\mathcal{X}$ denote the collection of shedding faces $\tau \in \{\tau_1, \dots, \tau_i\}$ such that $\tau \subseteq S$.
Observe that $\bigcap_{\tau \in \mathcal{X}}\tau = \emptyset$.
Indeed, otherwise if there is an element $u \in \bigcap_{\tau \in \mathcal{X}}\tau$, then $S \ssm u$ does not contain any set $\tau \in \mathcal{X}$.
This would mean that $S \ssm u \in \Delta_i$ and hence $\varphi_i(S) \leq 1$ by Lemma~\ref{lem:properties-of-phi-functions}(5).
According to Lemma \ref{lem:hitting-sets}, there exists a sub-collection $\mathcal{Y} \subseteq \mathcal{X}$ of shedding faces such that
$$\bigcap\nolimits_{\tau \in \mathcal{Y}}\tau = \emptyset \qquad \text{and} \qquad \left| \bigcup\nolimits_{\tau \in \mathcal{Y}}\tau \right| \le \left( \frac{k+3}{2} \right)^2 \le b.$$
Replacing~$S$ by~$\bigcup_{\tau \in \mathcal{Y}}\tau$, we can thus assume that~$\varphi_i(S) \geq 2$ and~$|S| \leq b$.

Since~$|S| \le b$, we can choose a subset $T \subseteq V$ of $a+1$ vertices of~$\Delta(a,b)$ such that $S \cup T$ contains at most one element from each pair~$\{u_j,v_j\}$.
By our choice of $i$ and Lemma \ref{lem:properties-of-phi-functions}(4), $\varphi_i(T) = \varphi_{i-1}(T) \leq 1$.
Thus there is a subset $A \subseteq T$ with $|A| = a$ such that $A \in \Delta_i$.

We claim that $A$ is not contained in any face of size $a+b$ in $\Delta_i$, which will contradict our assumption that $\Delta_i$ is pure of dimension $a+b-1$.
Suppose that there is a subset $B \subseteq U$ with $|B| = b$ such that $A \cup B$ is a facet of $\Delta_i$.
Then $B \cap S$ is a face of $\Delta_i$ and $|B \cap S| \geq |S|-1$, meaning $\varphi_i(S) \leq 1$, which contradicts our choice of $S$.
\end{proof}

\section*{Acknowledgments}

We are grateful to Jes\'us De Loera and Paco Santos for a number of helpful and insightful conversations during the cultivation of the ideas that motivated this paper.
The main ideas behind this paper we discovered during the Triangulations workshop at the Oberwolfach MFO during May 2012.
We would like to extend our sincere thanks to the organizers -- William Jaco, Frank Lutz, Paco Santos, and John Sullivan -- for the invitation to attend the workshop.
Nicolai H\"ahnle and Steven Klee also thank the Oberwolfach Leibniz Grant for providing travel support for the workshop.

\bibliographystyle{alpha}
\bibliography{decomposabilityTransportationPolytopes}

\end{document}